\newtheorem{thm}{Theorem}[section]
\newtheorem{corollary}[thm]{Corollary}
\newtheorem{lemma}[thm]{Lemma}
\theoremstyle{remark}
\theoremstyle{definition}
\theoremstyle{definition}
\newtheorem*{ack}{Acknowledgements}
\theoremstyle{definition}
\newtheorem*{out}{Outline}
\newcounter{example}[section]
\numberwithin{equation}{section}
\newcommand{\N}{\mathbb{N}}
\newcommand{\Z}{\mathbb{Z}}
\newcommand{\Q}{\mathbb{Q}}
\newcommand{\R}{\mathbb{R}}
\newcommand{\C}{\mathbb{C}}
\newcommand{\Qtr}{\Q^{\mathrm{tr}}}
\newcommand{\Par}{\mathrm{Par}}
\newcommand{\PCF}{\mathrm{PCF}}
\newcommand{\M}{\mathrm{M}}
\title{Totally real points in the multibrot sets}
\date{\today}
\author[Alessio~Cangini]{Alessio~Cangini}
\address{Alessio Cangini,
    Departement Mathematik und Informatik,
    Universit\"{a}t Basel,
    Spiegelgasse 1,
    4051 Basel,
    Switzerland}
\email{alessio.cangini@unibas.ch}
\urladdr{\url{https://sites.google.com/view/alessio-cangini}}
\author[Hang~Fu]{Hang~Fu}
\address{Hang Fu,
    Departement Mathematik und Informatik,
    Universit\"{a}t Basel,
    Spiegelgasse 1,
    4051 Basel,
    Switzerland}
\email{drfuhang@gmail.com}
\urladdr{\url{https://sites.google.com/view/hangfu}}
\subjclass{Primary: 37P05, 37P15}
\keywords{Arithmetic dynamics, multibrot sets, parabolic points, PCF points}
\thanks{The authors acknowledge support by the Swiss National Science 
Foundation Grant ``Rational points, arithmetic dynamics, and 
heights'' no. 219397.}
\begin{document}

\begin{abstract}
    We classify all totally real parabolic parameters in the multibrot sets, extending a theorem of Buff and Koch.
\end{abstract}
\maketitle
\section{Introduction}\label{section: introduction}

Let~$d \ge 2$ be an integer and consider the family of
polynomials~$f_c(z) = z^d + c \in \C[z]$. For $n \in \N = \{1, 2, \dots\}$, we denote by $f^n_c$ the $n$-th 
iteration of $f_c$.

We focus on three subsets of the parameter space.
The \textit{multibrot set} $\M_d$ consists of all~$c \in \C$ such that the orbit~$\{f_c^n(0) \colon n \in \N\}$ is bounded. When ~$d = 2$, the multibrot set $\M_2$ is the well-known \textit{Mandelbrot~set}. The set of \textit{postcritically finite} parameters, denoted by $\PCF_d$, consists of all $c \in \C$ such that  the orbit~$\{f_c^n(0) \colon n \in \N\}$ is finite.
A point~$z \in \C$ is ~\textit{periodic} for $f_c$ if there is $n \in \N$ such that $f_c^n(z) = z$. If $z \in \C$ is 
periodic and $n \in \N$ is the smallest period, then the \textit{multiplier} of $f_c$ at $z$ 
is given by $(f_c^n)'(z)$. A periodic point is \textit{parabolic} if its multiplier is a root of unity. A parameter $c \in
\C$ is \textit{parabolic} if $f_c$ has a parabolic periodic point; the set of such parameters is denoted by $\Par_d$.

Both $\PCF_d$ and $\Par_d$ are Galois invariant subsets of $\M_d$. An algebraic number is \textit{totally real} if all its Galois conjugates are real. The field of totally real numbers is denoted by $\Qtr$. Noytaptim and Petsche studied the arithmetic properties of $\PCF_d$ and classified $\PCF_d \cap \Qtr$ for all $d \ge 2$.

\begin{thm}[{\cite[Theorem 1]{MR4710201}}]\label{theorem: noytaptim - petsche} We have
\begin{align*}
    \PCF_2 \cap \Qtr & = \{ 0, -1, -2 \}, \\
    \PCF_d \cap \Qtr & = \{ 0, -1 \} \text{ if } d \ge 4 \text{ is even}, \\
    \PCF_d \cap \Qtr & = \{ 0 \}\text{ if } d \ge 3 \text{ is odd}.
\end{align*}
\end{thm}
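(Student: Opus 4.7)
The plan is to combine Galois invariance with Kronecker's theorem. First, any $c \in \PCF_d$ is an algebraic integer: the preperiodicity relation $f_c^{s+p}(0)=f_c^s(0)$ provides a monic polynomial equation in $c$ over~$\Z$, since $f_c^n(0) \in \Z[c]$ is monic of degree $d^{n-1}$. As $\PCF_d \subseteq \M_d$ and is Galois invariant, every Galois conjugate of a totally real $c \in \PCF_d$ lies in $\M_d \cap \R$.

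The crux is to pin down $\M_d \cap \R$ inside a small real interval. Let $c_d^+ = \tfrac{d-1}{d}\,d^{-1/(d-1)}$, the parameter at which $f_c$ has a tangent bifurcation on the positive real axis (simultaneous solution of $f_c(z)=z$ and $f_c'(z)=1$); one checks $c_d^+ < 1$ for every $d\ge 2$. A direct real-variable analysis of $f_c$ on $\R$ yields
\[
    \M_d \cap \R \;\subseteq\;
    \begin{cases}
    [-c_d^+,\,c_d^+] \,\subset\, (-1,1), & d \text{ odd}, \\[2pt]
    [-2^{1/(d-1)},\,c_d^+], & d \text{ even}.
    \end{cases}
\]
Indeed, for $c > c_d^+$ one has $f_c(z) > z$ on $[0,\infty)$, so the critical orbit escapes; for $d$ odd, monotonicity of $f_c$ and the absence of non-positive fixed points for $c < -c_d^+$ forces the orbit to $-\infty$; for $d$ even and $c < -2^{1/(d-1)}$, a short computation shows that $f_c^2(0)$ overshoots the positive repelling fixed point of $f_c$ (which equals $-c$ at $c = -2^{1/(d-1)}$), after which the orbit escapes.

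Next I invoke Kronecker's theorem. For $d$ odd, every conjugate of $c$ has absolute value strictly less than $1$; since $c$ is an algebraic integer whose minimal polynomial has constant term of absolute value $<1$, this term must vanish and $c = 0$. For $d$ even, every conjugate of $c$ lies in $[-2,2]$; writing $c = 2\cos\theta$, the element $\beta = e^{i\theta}$ is an algebraic integer whose conjugates all lie on the unit circle, hence a root of unity by Kronecker. Thus $c = 2\cos(2\pi k/n)$ for some $n$. The constraint that the largest conjugate $2\cos(2\pi/n)$ be $\le c_d^+ < 1$ forces $n \in \{2,3,4,5\}$, and $n = 5$ is eliminated: its conjugate $-(1+\sqrt 5)/2 \approx -1.618$ lies below $-2^{1/(d-1)}$ for every $d \ge 3$, while for $d = 2$ the upper bound $c_2^+ = 1/4$ already excludes $2\cos(2\pi/5) \approx 0.618$. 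The survivors $n \in \{2,3,4\}$ give $c \in \{-2,-1,0\}$, and $-2$ is admissible only when $d = 2$ (since $-2^{1/(d-1)} > -2$ for $d \ge 4$).

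Finally, each surviving $c$ is directly verified to be PCF: $0$ is a fixed point of $f_c$; $f_{-1}^2(0) = (-1)^d - 1 = 0$ whenever $d$ is even; and for $d = 2$ we have $f_{-2}(0) = -2$, $f_{-2}^2(0) = 2$, $f_{-2}^3(0) = 2$. The main obstacle is the real-dynamics step establishing the containment above for $\M_d \cap \R$: it requires a careful monotonicity/unimodality analysis of $f_c$ on the real line together with the tangent-bifurcation computation for the upper endpoint. Once this interval is in hand, the arithmetic conclusion is an elementary application of Kronecker's theorem together with a finite enumeration over small $n$.
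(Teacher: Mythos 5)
Your proof is correct, and while it follows the paper's broad Buff--Koch strategy (trap all Galois conjugates in the real slice of $\M_d$, then replace capacity theory by Kronecker's theorem), the execution is genuinely different. The paper simply quotes the known equalities $\M_d\cap\R=[-\alpha(d),\alpha(d)]$ for odd $d$ and $[\beta(d),\alpha(d)]$ for even $d$ (your $c_d^+$ is exactly $\alpha(d)=(d-1)d^{-d/(d-1)}$), so the real-dynamics containment you flag as the main obstacle can be outsourced to the cited references; your sketch of it is sound but not needed. The real divergence comes afterwards: the paper argues dynamically that a conjugate lying in $[0,\alpha(d)]$ (and, for odd $d$, anywhere in $[-\alpha(d),\alpha(d)]$) forces $c=0$, using Lemmas \ref{lemma: lemma 2.1} and \ref{lemma: carleson-gamelin} (the attracting or parabolic fixed point absorbs the finite critical orbit, and monotonicity of $f_c$ pins that fixed point at $0$), and only then applies Kronecker via the arc lemma (Lemma \ref{lemma: galois orbit in -2, 0}) to conjugates confined to $[-2,0]$. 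You avoid dynamics altogether: for odd $d$ the constant term of the minimal polynomial of the algebraic integer $c$ has absolute value less than $1$ because $\alpha(d)<1$, hence $c=0$; for even $d$ you apply Kronecker on all of $[-2,2]$, writing $c=2\cos(2\pi k/n)$, and the bound $2\cos(2\pi/n)\le\alpha(d)<1$ forces $n\le5$, after which $n=5$ is killed at the endpoints ($\alpha(2)=1/4$, and $\beta(d)=-2^{1/(d-1)}>-(1+\sqrt5)/2$ for even $d\ge4$), leaving $c\in\{2,-2,-1,0\}$ from $n\in\{1,2,3,4\}$, with $2$ impossible and $-2$ admissible only for $d=2$. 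Your purely arithmetic finish is arguably cleaner and dispenses with the Fatou-type input of Lemma \ref{lemma: carleson-gamelin}; the paper's dynamical step, on the other hand, is precisely what it reuses for the parabolic classification in Section \ref{section:4}. Two small polishings: include $n=1$ (i.e.\ $c=2$) explicitly before discarding it, and note that the $n=5$ elimination is only relevant in the even case, since for odd $d$ the norm argument already concludes.
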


In order to prove Theorem~\ref{theorem: noytaptim - petsche}, Noytaptim and Petsche used capacity-theoretic tools to bound the degree of points in $\PCF_d \cap \Qtr$. Buff and Koch then realized that, with the help of Kronecker's theorem, they were able to avoid the capacity-theoretic tools and substantially simplify the proof of Theorem~\ref{theorem: noytaptim - petsche} for the case $d=2$. As a further application of their approach, they considered the totally real parabolic points and proved the following result.

\begin{thm}[{\cite[Proposition 2]{buff2022totallyrealpointsmandelbrot}}]\label{theorem: buff - koch}
We have $$\Par_2 \cap \Qtr = \{ {1}/{4}, -{3}/{4}, -{5}/{4}, -{7}/{4}\}.$$
\end{thm}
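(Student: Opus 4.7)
The plan is to establish both inclusions. The forward inclusion $\{1/4, -3/4, -5/4, -7/4\} \subseteq \Par_2 \cap \Qtr$ is immediate since each value is rational and hence totally real, and the parabolic cycle is exhibited directly: $c = 1/4$ and $c = -3/4$ give fixed points $z = \pm 1/2$ with multipliers $\pm 1$; for $c = -5/4$ the period-$2$ cycle $\{z, w\}$ satisfies $z + w = -1$ and $zw = c + 1 = -1/4$, so its multiplier $4zw = -1$; and for $c = -7/4$ the polynomial $g_3(z, c) := (f_c^3(z) - z)/(f_c(z) - z)$ specialises to $(z^3 + z^2/2 - 9z/4 - 1/8)^2$, yielding a period-$3$ cycle of multiplier $1$.

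For the reverse inclusion, let $c \in \Par_2 \cap \Qtr$ have a parabolic cycle of exact period $n$ and multiplier $\lambda$, a root of unity. Galois invariance of $\Par_2$ combined with $c \in \Qtr$ places every Galois conjugate of $c$ in $\Par_2 \cap \R \subseteq \M_2 \cap \R = [-2, 1/4]$. For $n = 1$, the fixed-point relation $4c = \lambda(2 - \lambda)$ demands that $\sigma(\lambda)(2 - \sigma(\lambda)) \in \R$ for every $\sigma \in \Gal(\bar{\Q}/\Q)$; writing $\sigma(\lambda) = e^{i\theta}$ this becomes $2\sin\theta(1 - \cos\theta) = 0$, forcing $\sigma(\lambda) \in \{\pm 1\}$. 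Since the Galois orbit of a primitive $m$-th root of unity consists of all primitive $m$-th roots, this means $m \in \{1, 2\}$, so $\lambda \in \{\pm 1\}$ and $c \in \{1/4, -3/4\}$. A similar analysis using $\lambda = 4(c + 1)$ handles $n = 2$ and produces $c = -5/4$.

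For $n = 3$, I would derive the multiplier polynomial
\[
\lambda^2 - 8(c + 2)\lambda + 64(c^3 + 2c^2 + c + 1) = 0
\]
by computing the symmetric functions $\sigma_3 + \sigma_3' = c + 2$ (via Newton's identities on $g_3(z, c)$) and $\sigma_3 \sigma_3' = c^3 + 2c^2 + c + 1$, where $\sigma_3, \sigma_3'$ denote the products of points in the two period-$3$ cycles. Setting $\lambda = 1$ yields the cubic $64c^3 + 128c^2 + 56c + 49$, which factors over $\Q$ as $(c + 7/4)(64c^2 + 16c + 28)$; the quadratic factor has complex conjugate roots, so $-7/4$ is the only totally real period-$3$ cusp. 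For $\lambda = -1$, the resulting cubic $64c^3 + 128c^2 + 72c + 81$ is irreducible over $\Q$ with a unique real root whose complex conjugates prevent it from lying in $\Qtr$. For complex $\lambda$ of modulus $1$, the conditions $\lambda\bar\lambda = 1$ and $\lambda + \bar\lambda \in [-2, 2]$ force $c^3 + 2c^2 + c + 1 = 1/64$, whose only real solution is the already-counted $c = -7/4$.

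The hard part will be ruling out totally real parabolic parameters of period $n \ge 4$. Here I expect the argument to invoke a Kronecker-style input: although the relation $\lambda + \lambda^{-1} = 2\cos(2\pi k/m)$ already places a totally real algebraic integer with conjugates in $[-2, 2]$, the additional constraint that all Galois conjugates of $c$ lie in $[-2, 1/4]$, together with the algebraic relation between $c$ and $\lambda$ arising from the period-$n$ multiplier curve, should bound the period $n$ and the order $m$ of $\lambda$ effectively; the remaining finite list of candidates can then be eliminated by direct calculation. The central obstacle is making this bound on $n$ and $m$ effective, so that only finitely many cases need to be checked.
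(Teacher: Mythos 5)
Your forward inclusion and your treatment of periods $n\le 3$ are essentially correct (the multiplier relation $\lambda^2-8(c+2)\lambda+64(c^3+2c^2+c+1)=0$ and the factorization $64c^3+128c^2+56c+49=(c+7/4)(64c^2+16c+28)$ both check out, though the irreducibility of $64c^3+128c^2+72c+81$ is asserted rather than verified). The problem is the step you yourself flag: periods $n\ge 4$. This is not a residual technicality but the heart of the theorem. The real slice $\M_2\cap\R=[-2,1/4]$ contains infinitely many real parabolic parameters of arbitrarily large period (the period-doubling cascade $-3/4,-5/4,-1.3680\ldots$ and the root of every real hyperbolic window, e.g.\ period $4$ near $-1.94$, period $5$, and so on), so a period-by-period analysis through multiplier curves cannot terminate, and nothing in the constraints you list bounds $n$ or the order $m$ of $\lambda$: every real parabolic parameter automatically has multiplier $\pm1$ and all the data you invoke are compatible with every period. ``Making the bound on $n$ and $m$ effective'' is therefore precisely the missing idea, and as stated the proposal does not prove the reverse inclusion.

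For comparison: the paper does not reprove this statement (it is quoted from Buff--Koch), but its Section 2 toolkit reproduces their argument, which is global in the period and never needs a bound on $n$. By Milnor's result (Lemma 2.3), $4c\in\overline\Z$ and $(4c)\overline\Z+2\overline\Z=\overline\Z$. By the real dynamics (Lemma 2.1 together with Lemma 2.2), a real parabolic parameter cannot lie in $(-3/4,1/4)$ (attracting fixed point) nor in $(-5/4,-3/4)$ (attracting $2$-cycle), so once the values $1/4,-3/4,-5/4$ are set aside, every Galois conjugate of $c$ lies in $[-2,-5/4)$; hence $4c+6$ is a totally real algebraic integer with all conjugates in $[-2,1)$. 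Kronecker's theorem (as in Lemma 3.1) writes $4c+6=\zeta+\zeta^{-1}$ with $\zeta$ a root of unity all of whose conjugates have argument in $[\pi/3,5\pi/3]$, which forces the order of $\zeta$ to be at most $6$; the resulting handful of candidate values of $c$ is then eliminated directly, using in particular the $2$-adic coprimality condition from Milnor's lemma. Replacing your $n\ge4$ discussion by an argument of this shape (integrality of $4c$ plus Kronecker after confining the conjugates) is what completes the proof; note also that this trick is special to $d=2$, which is exactly why the paper's Section 4 switches to discriminant and capacity bounds for even $d\ge4$.
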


Given Theorems ~\ref{theorem: noytaptim - petsche} and \ref{theorem: buff - koch}, it is natural to ask if we can also compute the set $\Par_d \cap \Qtr$ for $d \ge 3$. Our main theorem answers this question.

\begin{thm}\label{theorem: theorem}
We have $\Par_3 \cap \Qtr  = \{ \pm {2\sqrt{3}}/{9}\}$ and $\Par_d \cap \Qtr  = \emptyset$ for $d \ge 4$.
\end{thm}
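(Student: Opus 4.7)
The plan is to adapt the Fatou--Shishikura and Galois-invariance strategy of Buff--Koch to the multibrot setting. Let $c \in \Par_d \cap \Qtr$ with parabolic cycle $\{z_0,\dots,z_{n-1}\}$ and multiplier $\lambda = d^n \prod_i z_i^{d-1}$, a root of unity. For any $\sigma \in \Gal(\bar\Q/\Q)$ the conjugate $\sigma(c)$ is real, so $f_{\sigma(c)}$ has real coefficients; since $z^d + c$ has a unique free critical point, Fatou--Shishikura allows at most one non-repelling cycle, and the Galois-conjugate cycle $\{\sigma(z_i)\}$ must equal its own complex conjugate. Thus $\sigma(\lambda) \in \R$, and a real root of unity is $\pm 1$, whence $\lambda \in \{\pm 1\}$. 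Writing $P = \prod_i z_i$, the same argument yields $P \in \Qtr$ with $P^{d-1} = \lambda/d^n \in \Q$; since $X^{d-1} - a$ has at most two real roots in $\bar\Q$, the minimal polynomial of $P$ has degree $\le 2$, and for $d$ odd the non-negativity $\sigma(P)^{d-1} \ge 0$ forces $\lambda = 1$.

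For the period-one case, the parabolic fixed point gives $c^{d-1} = \lambda(d-\lambda)^{d-1}/d^d$. For $d = 3$ and $\lambda = 1$ this yields $c^2 = 4/27$, so $c = \pm 2\sqrt{3}/9$, both totally real. For $d \ge 4$ the polynomial $X^{d-1} - \lambda(d-\lambda)^{d-1}/d^d \in \Q[X]$ is irreducible by Capelli's criterion (using $\gcd(d-1,d) = 1$ together with $d^{1/(d-1)} \notin \Q$), so $c$ has $d - 1 \ge 3$ Galois conjugates, at most two of which are real; hence $c \notin \Qtr$.

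The heart of the argument treats higher periods with $d$ odd. Here $f_c\vert_\R$ is a monotone increasing homeomorphism, so any parabolic cycle of period $n \ge 2$ lies off $\R$. Being closed under complex conjugation and permuted by the dynamical shift, the cycle satisfies $\overline{z_i} = z_{i + n/2}$; in particular $n$ is even. Writing $z_i = r_i e^{i\theta_i}$ with $\theta_i \notin \pi \Z$, the imaginary part of $z_{i+1} = z_i^d + c$ reads $r_{i+1}\sin\theta_{i+1} = r_i^d \sin(d\theta_i)$. Taking the product over $i = 0, \ldots, n/2-1$, using $\sin\theta_{n/2} = -\sin\theta_0$ together with the multiplier relation $\prod_{i=0}^{n/2-1} r_i^{d-1} = d^{-n/2}$, yields
\begin{equation*}
\Bigl|\prod_{i=0}^{n/2 - 1}\frac{\sin(d\theta_i)}{\sin\theta_i}\Bigr| = d^{n/2}.
\end{equation*}
This violates the classical bound $|\sin(d\theta)/\sin\theta| < d$ valid for $\theta \notin \pi \Z$. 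The same imaginary-part argument also rules out non-real parabolic cycles when $d$ is even.

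The hardest step will be the real-cycle case for $d \ge 4$ even, where the imaginary-part trick fails. The constraint $P \in \Q$ forces $d^n$ to be a rational $(d-1)$-th power, giving strong divisibility restrictions on $n$ (e.g., $3 \mid n$ for $d = 4$, $5 \mid n$ for $d = 6$). For each admissible $(d, n)$, one must then show that the polynomial satisfied by $c$, namely a factor of the multiplier polynomial $\delta_n(c, \pm 1) \in \Z[c]$, has no totally real root; I expect this to follow from a Capelli-type analysis of the denominators of $c$ (which are powers of $d$) combined with the observation that an irreducible factor of degree $\ge 3$ over $\Q$ cannot have all its Galois conjugates in the bounded real interval $\M_d \cap \R$.
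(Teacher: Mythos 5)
Your reduction to \emph{real} parabolic cycles is correct and genuinely different from the paper: the imaginary-part identity $r_{i+1}\sin\theta_{i+1}=r_i^d\sin(d\theta_i)$, combined with $\overline{z_i}=z_{i+n/2}$ and $|\lambda|=1$, does contradict the strict bound $|\sin(d\theta)|<d|\sin\theta|$, so a real parameter cannot carry a non-real parabolic cycle; together with monotonicity this settles odd $d$ up to identifying $\pm\alpha(d)$, whereas the paper reaches the same reduction via its Lemma 2.1 and the uniqueness of non-repelling cycles (Carleson--Gamelin). The Galois argument forcing $\lambda=\pm1$ and $\prod_i z_i\in\Qtr$ is also sound. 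One slip in this part: the Capelli irreducibility claim for $X^{d-1}-\lambda(d-\lambda)^{d-1}/d^d$ is false in general (for $d=9$, $\lambda=1$ the constant $8^8/9^9$ is a rational square), but what you actually need is weaker and true: $X^{d-1}-a$ has at most two real roots, so a totally real $c$ would have degree $\le2$, and $\Q(\alpha(d))=\Q(d^{1/(d-1)})$ has degree $>2$ for $d\ge4$; so this is fixable.

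The genuine gap is the case you defer: $d\ge4$ even with a real parabolic cycle, which is precisely the substance of the theorem, and your proposed route would fail as stated. The ``observation'' that an irreducible factor of degree $\ge3$ cannot have all its conjugates in the bounded interval $\M_d\cap\R$ is false for general algebraic numbers: small transfinite diameter constrains \emph{algebraic integers}, while here $c$ is not integral --- only $d^{d/(d-1)}c$ is (Milnor), and by the paper's Corollary 2.5 the leading coefficient is exactly $a_n=d^{nd/(d-1)}$, so the denominators fight against the smallness of the interval and must be beaten quantitatively. The paper does this by (a) first excluding all real parabolic $c>-1$ (they must be $\alpha(d)$ or $\gamma(d)$, neither totally real), so that all conjugates lie in the short interval $[\beta(d),-1]$ of length $2^{1/(d-1)}-1$; (b) comparing the divisibility lower bound $a_n^{n-1}\mid\Delta(P)$ with the $n$-th diameter upper bound $\Delta(P)\le a_n^{2(n-1)}(2^{1/(d-1)}-1)^{n(n-1)}D_n$, which kills all $(d,n)$ except $(4,3)$; and (c) handling $(d,n)=(4,3)$, which \emph{survives} the capacity bound ($2^{16}\le\Delta(P)<2^{17}$), by a separate $2$-adic congruence computation showing $\Delta(P)\equiv5\cdot2^{16}\bmod 2^{19}$. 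Your divisibility remark ($3\mid n$ for $d=4$, etc.) is fine but does not substitute for any of this, so the even case $d\ge4$ remains unproved in your proposal.
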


\begin{figure}[h]\label{figure}
    \newpage
    \centering
    \includegraphics[scale = 0.6]{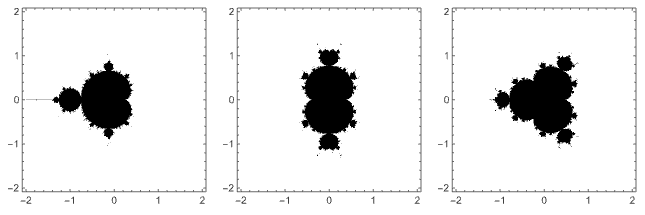}
    \caption{The multibrot sets $\M_2$, $\M_3$ and $\M_4$. The images have been produced with \textsc{Mathematica}.}
\end{figure}
We first note that the simplified proof of Theorem~\ref{theorem: noytaptim - petsche} for the case $d=2$ given by Buff and Koch can be easily generalized to the cases $d\ge 3$ without too much extra effort. However, their proof of Theorem \ref{theorem: buff - koch} relies on the crucial fact that if $c$ is a totally real parabolic parameter, then $4c$ is a totally real algebraic integer, so that Kronecker's theorem can be applied. Due to the lack of this property, their idea does not seem to generalize to the cases $d\ge3$. To tackle this difficulty, we employ the capacity-theoretic tools again.

\begin{out}
In Section \ref{section: background}, we collect some basic properties of $\M_d$ and also review the capacity-theoretic tools mentioned above. In Section \ref{section:3}, we give the proof of Theorem \ref{theorem: noytaptim - petsche}, following the idea of Buff and Koch. In Section \ref{section:4}, we give the proof of Theorem \ref{theorem: theorem}.
\end{out}

\begin{ack} 
We would like to thank Philipp Habegger for the helpful comments on the first draft of this paper.
\end{ack}
\section{Preliminary lemmas}\label{section: background}

In this section, we provide some general lemmas that will be used later. We do not assume any number is totally real.

\subsection{Real slices of multibrot sets}
Since both $\PCF_d \cap \Qtr$ and $\Par_d\cap \Qtr$ are subsets of $\M_d\cap\R$, we first take a closer look at the real slices of multibrot sets.

We define the quantities
    \[
        \alpha(d) = (d-1)d^{-d/(d-1)},\qquad
        \beta(d)  = -2^{1/(d-1)},\qquad
        \gamma(d) = -(d+1)d^{-d/(d-1)},
    \]
where the $(d-1)$-th roots are always taken on the positive real axis. It is known that $\M_d \cap \R = [- \alpha(d), \alpha(d)]$ if $d$ is odd \cite{MR3654403} and $\M_d \cap \R = [\beta(d), \alpha(d)]$ if $d$ is even \cite{ParRanRon}.

When $d$ is odd, for any $c\in\M_d\cap\R$, the polynomial $f_c$ always has a fixed point, either parabolic or attractive. 
When $d$ is even, the dynamics of $f_c$ with $c\in\M_d\cap\R$ is much more complicated. Fortunately, the following lemma enables us to locate the candidate totally real parabolic points in a small subinterval of $\M_d\cap\R$.

\begin{lemma}\label{lemma: lemma 2.1} 
Let $d\geq2$ be an integer. Then:
\begin{itemize}
    \item[(i)] The polynomial $f_{\alpha(d)}$ has a parabolic fixed point.
    \item[(ii)] If $d$ is odd and $-\alpha(d)<c<\alpha(d)$, then $f_{c}$ has an attractive fixed~point.
    \item[(iii)] If $d$ is odd, then $f_{-\alpha(d)}$ has a parabolic fixed point.
    \item[(iv)] If $d$ is even and $\gamma(d)<c<\alpha(d)$, then $f_{c}$ has an attractive fixed~point. 
    \item[(v)] If $d$ is even, then $f_{\gamma(d)}$ has a parabolic fixed point.
    \item[(vi)] If $d$ is even and $-1 < c < \gamma(d)$, then $f_c$ has an attractive cycle of period~$2$.
\end{itemize}
\end{lemma}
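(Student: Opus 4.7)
\textbf{Plan for Lemma~\ref{lemma: lemma 2.1}.}

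Setting $g(z) := z - z^d$, the fixed-point equation $f_c(z) = z$ becomes $g(z) = c$, with multiplier $f_c'(z) = dz^{d-1}$. The real critical points of $g$ solve $dz^{d-1} = 1$: namely $z_0 := d^{-1/(d-1)}$ when $d$ is even, and $\pm z_0$ when $d$ is odd. A direct computation gives $g(z_0) = \alpha(d)$ and $g(-z_0) = -\alpha(d)$ (odd $d$) or $\gamma(d)$ (even $d$); the multiplier at $\pm z_0$ is $(\pm 1)^{d-1}$, which equals $+1$ everywhere except at $z = -z_0$ with $d$ even, where it is $-1$. This settles (i), (iii), and (v). For (ii) and (iv), the function $g$ is strictly increasing on $(-z_0, z_0)$ for odd $d$ and on $(-\infty, z_0)$ for even $d$, and in each case it maps the appropriate subinterval bijectively onto the $c$-interval in the statement. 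Hence $z = g^{-1}(c) \in (-z_0, z_0)$, so $|f_c'(z)| = d|z|^{d-1} < 1$, giving the attractive fixed point.

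The heart of the proof is (vi). Fix $d$ even and $c \in (-1, \gamma(d))$, and consider $F(z) := f_c^2(z) - z$ on $[c, 0]$. I plan to evaluate $F$ at three reference points. First, $F(0) = c^d + c < 0$ since $|c| < 1$ forces $c^d < -c$. Second, $F(c) = f_c(c^d + c) - c = (c^d + c)^d > 0$, since $d$ is even and $c^d + c \neq 0$. Third, $F(z_*) = 0$, where $z_* := g^{-1}(c)$ is the unique fixed point of $f_c$ in $(c, 0)$; the inclusion $z_* \in (c, -z_0)$ follows from the monotonicity of $g$ together with $g(c) = c - c^d < c$ (as $c^d > 0$) and $c < \gamma(d) = g(-z_0)$. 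Since $|z_*| > z_0$, the derivative $F'(z_*) = (f_c'(z_*))^2 - 1$ is strictly positive.

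The sign pattern $F(c) > 0$, $F(z_*) = 0$ with $F'(z_*) > 0$, and $F(0) < 0$ forces a strict $+\to-$ sign change of $F$ at some $z_a \in (c, z_*)$, yielding a real period-$2$ point with $F(z_a) = 0$ and $F'(z_a) \leq 0$. The point $z_a$ is genuinely period-$2$ because it differs from $z_*$ and from the other real fixed point $z_{**} > 0$. Writing $z_a' := f_c(z_a)$, the multiplier $m = d^2 (z_a z_a')^{d-1}$ of the 2-cycle $\{z_a, z_a'\}$ satisfies $m - 1 = F'(z_a) \leq 0$. For the lower bound on $m$, the chain $|z_a| < |c| < |c|^{1/d} < 1$ forces $z_a^d < -c$, so $z_a' = z_a^d + c < 0$; combined with $z_a < 0$, this gives $z_a z_a' > 0$ and hence $m > 0$. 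The main obstacle I anticipate is upgrading $m \leq 1$ to $m < 1$ (i.e.\ strict attractiveness); this amounts to ruling out interior parabolic parameters in the open interval $(-1, \gamma(d))$, which I plan to handle by continuation from the superattractive reference $c = -1$, where $m = 0$, tracking the 2-cycle through the open interval and observing that an interior value $m(c_0) = 1$ would correspond to a real parabolic parameter not already accounted for by the boundary point $\gamma(d)$.
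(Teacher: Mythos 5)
Your treatment of (i)--(v) is correct and essentially the same as the paper's: the fixed points are $\pm d^{-1/(d-1)}$ with multipliers $\pm 1$ at the boundary parameters, and the interior parameters have a fixed point of multiplier $d|z|^{d-1}<1$ by monotonicity of $g(z)=z-z^d$ and the intermediate value theorem. Your route to (vi) is genuinely different from the paper's, and most of it is sound: the sign pattern $F(c)>0$, $F(z_*)=0$ with $F'(z_*)>0$, $F(0)<0$ for $F=f_c^2-\mathrm{id}$ does produce a real point of exact period $2$, and your argument that its multiplier $m$ satisfies $0<m\le 1$ is correct.

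The genuine gap is the last step, $m<1$, and your proposed fix does not close it. A topological sign change can only ever give $F'(z_a)\le 0$, i.e.\ $m\le 1$, so the strictness must come from somewhere else; but "an interior value $m(c_0)=1$ would correspond to a real parabolic parameter not already accounted for by $\gamma(d)$" is not a contradiction --- nothing established so far excludes real parabolic parameters inside $(-1,\gamma(d))$. Indeed, in the paper this very statement (vi), combined with Lemma~\ref{lemma: carleson-gamelin}, is the tool used later to prove that no $c\in(-1,\gamma(d))$ is parabolic, so invoking that conclusion here is circular. Continuation from the superattracting parameter $c=-1$ only gives continuity of the multiplier along the branch of $2$-cycles; to know it never reaches $1$ on the open interval you would need, e.g., monotonicity of the multiplier map for real unicritical families (a nontrivial input) or an explicit algebraic argument. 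The paper avoids the issue by construction: setting $x=(z^d+c)/z$, a period-$2$ cycle of multiplier $\lambda$ corresponds to a root of $\lambda\bigl(\sum_{k=0}^{d-1}x^k\bigr)^2=d^2x^{d-1}$; an AM--GM inequality shows $\lambda\mapsto x$ is a bijection of $(0,1)$ onto $(0,1)$, and the continuous map $x\mapsto c$ sends $x=0,1$ to $-1,\gamma(d)$, so every $c\in(-1,\gamma(d))$ carries a cycle whose multiplier lies in the open interval $(0,1)$, giving strict attractiveness for free. To salvage your approach you would need an analogous argument showing that a real cycle of period dividing $2$ with multiplier exactly $1$ forces $c\in\{\alpha(d),\gamma(d)\}$ among real $c>-1$, which is essentially the paper's computation in disguise.
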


We think this is a special case of the general phenomenon called \textit{period-doubling bifurcation}, but we cannot find a good reference, so we provide an \textit{ad hoc} proof.

\begin{proof} Let $\delta(d)=d^{-1/(d-1)}$. We can easily check that

(i): the parabolic fixed point is $\delta(d)$;

(iii, v): the parabolic fixed point is $-\delta(d)$;

(ii, iv): the attractive fixed point is between $-\delta(d)$ and $\delta(d)$, by the intermediate value theorem.

For (vi), let $z$ be a period $2$ point of $f_c$ but not a fixed point. Then $z$ satisfies
\[
    \frac{f_c^2 (z)-z}{f_c (z)-z}=\frac{(z^d+c)^d-z^d+z^d+c-z}{z^d+c-z}=0,
\]
namely,
\[
    \sum_{k=0}^{d-1} z^k(z^d+c)^{d-1-k}+1=0.
\]
Let
\begin{equation}\label{eq:2.1}
    x=\frac{z^d+c}{z}.
\end{equation}
Then we have
\begin{equation}\label{eq:2.2}
     \sum_{k=0}^{d-1} x^k =-z^{-(d-1)}.
\end{equation}
Let $\lambda$ be the multiplier of $z$. Then we have
\[
    (f_c^2)'(z)=d^2z^{d-1}(z^d+c)^{d-1}=\lambda,
\]
and
\begin{equation}\label{eq:2.3}
    \lambda^{-1}d^2x^{d-1}= z^{-2(d-1)}.
\end{equation}
Combining \eqref{eq:2.2} and \eqref{eq:2.3}, we get
\begin{equation}\label{eq:2.4}
    g_\lambda(x):=\lambda\left(\sum_{k=0}^{d-1} x^k\right)^2-d^2x^{d-1}=0.
\end{equation}

Now suppose that $0<\lambda<1$. Then by Descartes' sign rule, the polynomial $g_\lambda$ has at most two positive real roots.
Since $g_\lambda(0)=\lambda>0$, $g_\lambda(1)=d^2(\lambda-1)<0$, and $g_\lambda(+\infty)=+\infty$, we know that $g_\lambda$ has a unique root $0<x<1$.

We claim that $\lambda\mapsto x$ is a bijection from $(0,1)$ to $(0,1)$. It is clearly injective by \eqref{eq:2.4}. It is also surjective because if $0<x<1$, then 
\[
    0<\frac{d^2x^{d-1}}{\left(\sum_{k=0}^{d-1} x^k\right)^2}<\frac{d^2x^{d-1}}{d^2\left(\prod_{k=0}^{d-1}x^k\right)^{2/d}}=1.
\]
Combining \eqref{eq:2.1} and \eqref{eq:2.2}, we get
\[
    c = z(x-z^{d-1})=\frac{-\sum_{k=0}^{d}x^k}{\left(\sum_{k=0}^{d-1}x^k\right)^{d/(d-1)}}.
\]
If $x=0$, then $c=-1$. If $x=1$, then $c=\gamma(d)$.
Since the map $x\mapsto c$ is continuous, any 
$-1 < c < \gamma(d)$ is the image of some $0<x<1$, which is in turn the image of some 
$0<\lambda<1$.
\end{proof}

The following lemma is a special case of {\cite[Theorems III.2.2 and III.2.3]{MR1230383}}.

\begin{lemma}\label{lemma: carleson-gamelin}
Any attractive or parabolic cycle of $f_c$ attracts the orbit of the critical point $0$. In particular:
\begin{itemize}
    \item[(i)] The polynomial $f_c$ has at most one attractive or parabolic cycle.
    \item[(ii)] If $f_c$ has an attractive or parabolic fixed point $z_c$, then the orbit of the critical point $0$ converges to $z_c$.
\end{itemize}
\end{lemma}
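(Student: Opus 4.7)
The plan is to invoke the classical theorem of Fatou: for any rational map, every cycle of immediate basins associated to an attractive or parabolic periodic cycle must contain a critical point. Since the only finite critical point of $f_c(z)=z^d+c$ is $0$ (with multiplicity $d-1$), any attractive or parabolic cycle of $f_c$ necessarily attracts the forward orbit of~$0$. This is the main assertion of the lemma; parts (i) and (ii) follow formally.

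First, for an attractive cycle $C=\{z_0,\dots,z_{n-1}\}$ of multiplier $\lambda$ with $|\lambda|<1$, I would use Koenigs linearization: in a neighborhood of $z_0$, the iterate $f_c^n$ is conjugate to the contraction $w\mapsto\lambda w$. Extending the linearizing coordinate to its maximal domain in the immediate basin component containing~$z_0$ produces a proper holomorphic self-map whose degree exceeds~$1$, which forces a critical point of $f_c$ into the immediate basin. In our family the only available critical point is~$0$, so its orbit is attracted to~$C$. This is the content of Carleson--Gamelin III.2.2.

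For a parabolic cycle, I would invoke the Leau--Fatou flower theorem. At each parabolic periodic point the local dynamics decomposes into attracting and repelling petals, and the attracting petals are permuted cyclically by~$f_c$. Each cycle of attracting petals lies inside a cycle of Fatou components, and, by an argument analogous to the attractive case (Carleson--Gamelin III.2.3), each such cycle of components must contain a critical point. Once again this critical point can only be~$0$, so the orbit of~$0$ converges to the parabolic cycle along the petals.

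The consequences are then immediate. For (i), two distinct attractive or parabolic cycles would each capture the orbit $\{f_c^n(0)\}_{n\in\N}$, but a single sequence in $\C$ can accumulate on at most one periodic cycle, a contradiction. For (ii), applying the main statement to the fixed point $z_c$ (regarded as a cycle of length one) yields $f_c^n(0)\to z_c$. The only real obstacle in the argument is the parabolic case, which is subtler than the attractive one because there is no local contraction available; it requires the full Leau--Fatou flower theorem and a careful covering analysis of the parabolic basin. Since both facts are standard and explicitly covered in Carleson--Gamelin, the cleanest route is to quote them directly rather than reproduce the flower theorem here.
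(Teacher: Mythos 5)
Your proposal is correct and matches the paper's approach exactly: the paper simply quotes Theorems III.2.2 and III.2.3 of Carleson--Gamelin (Fatou's theorem that every attractive or parabolic cycle attracts a critical point, which here can only be $0$) and treats (i) and (ii) as immediate consequences, just as you do. Your added sketches of the Koenigs linearization and Leau--Fatou flower arguments are accurate expansions of the cited results rather than a different route.
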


\subsection{Arithmetic of parabolic parameters}
Milnor studied the arithmetic of unicritical polynomials and proved the following property of $\Par_d$.
\begin{lemma}[{\cite[Theorem 1.1 and Remark 2.2]{MR3289903}}]\label{lemma: Milnor}
    If $c \in \Par_d$, then
$$ d^{\frac{d}{d-1}}c\in\overline\Z \qquad and \qquad \left(d^{\frac{d}{d-1}}c\right)\overline \Z + d \overline\Z = \overline\Z.$$    
\end{lemma}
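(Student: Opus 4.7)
The plan is to prove both statements by a local analysis at every non-archimedean place of $\overline{\Q}$. Let $z_0,\ldots,z_{n-1}$ be a parabolic cycle of $f_c$ of exact period $n$, so that $z_{k+1} = z_k^d + c$ (indices modulo $n$) and the multiplier
\[
    (f_c^n)'(z_0) = d^n \prod_{k=0}^{n-1} z_k^{d-1} = \zeta
\]
is a root of unity. Set $C = d^{d/(d-1)} c$. For any non-archimedean valuation $v$ on $\overline{\Q}$, normalised so that $v(p) = 1$ on the residue characteristic $p$, the multiplier equation at once gives
\[
    \sum_{k=0}^{n-1} v(z_k) = -\frac{n\,v(d)}{d-1}.
\]

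Next I would analyse $s := \min_k v(z_k)$ using the recursion and the non-archimedean triangle inequality. Suppose $s<0$ and pick $k_0$ with $v(z_{k_0}) = s$. If $v(c) \neq v(z_{k_0}^d) = ds$, then $v(z_{k_0+1}) = \min(ds,v(c)) \leq ds < s$, contradicting the minimality $v(z_{k_0+1}) \geq s$. Hence $v(c) = ds$. If some $k$ satisfies $v(z_k) = t > s$, then $v(z_k^d) = dt > ds = v(c)$, so $v(z_{k+1}) = v(c) = ds < s$, again a contradiction. Therefore $v(z_k) = s$ for every $k$; combined with the multiplier sum this forces $s = -v(d)/(d-1)$ and $v(c) = -dv(d)/(d-1)$.

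For a place with $v(d) = 0$, the multiplier sum is $\sum v(z_k) = 0$; were $s<0$, the previous paragraph would yield $s = 0$, a contradiction, so $s \geq 0$ and therefore $v(z_k) = 0$ for every $k$. The recursion then gives $v(c) = v(z_{k+1} - z_k^d) \geq 0$, whence $v(C) = v(c) \geq 0$. For a place with $v(d) > 0$ the previous paragraph applies directly and yields $v(C) = \tfrac{d}{d-1}v(d) + v(c) = 0$. In either case $v(C) \geq 0$, so $C \in \overline{\Z}$, which is the first claim. For the coprimality statement it suffices to check that no maximal ideal $\mathfrak{m}$ of $\overline{\Z}$ contains both $C$ and $d$: if $\mathfrak{m}$ lies above a rational prime not dividing $d$, then $d$ is a unit modulo $\mathfrak{m}$, and otherwise the analysis above gives $v_{\mathfrak{m}}(C) = 0$, so $C$ is a unit modulo $\mathfrak{m}$.

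The main obstacle is the min/max argument of the second paragraph: one must pin down all the valuations $v(z_k)$ simultaneously, using only the ultrametric inequality together with periodicity and the fact that the multiplier is a unit. This is precisely where the unicritical form $z^d+c$ (producing the clean multiplier product $d^n\prod z_k^{d-1}$) and the parabolic assumption jointly do the work; every other step is then a straightforward bookkeeping of valuations.
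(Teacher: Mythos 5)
The paper never proves this lemma itself: it is imported directly from Milnor's work, so there is no in-paper argument to compare against, and your proof should be judged as a self-contained substitute. As such it is essentially correct, and the route is the natural one: from the multiplier identity $d^n\prod_{k}z_k^{d-1}=\zeta$ with $v(\zeta)=0$ you get $\sum_k v(z_k)=-nv(d)/(d-1)$; the ultrametric analysis of $z_{k+1}=z_k^d+c$ correctly forces, at a place with $v(d)=0$, that all $v(z_k)=0$ and $v(c)\ge 0$, and, at a place with $v(d)>0$, that all $v(z_k)=-v(d)/(d-1)$ and $v(c)=-dv(d)/(d-1)$, i.e.\ $v\bigl(d^{d/(d-1)}c\bigr)=0$; and the passage from these local statements to $d^{d/(d-1)}c\in\overline\Z$ and $\bigl(d^{d/(d-1)}c\bigr)\overline\Z+d\overline\Z=\overline\Z$ via maximal ideals of $\overline\Z$ is sound (also note that no $z_k$ can vanish, since otherwise the multiplier would be $0$, so all valuations are finite).

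Two points need a sentence each to be complete. First, at a place with $v(d)>0$ you say the $s<0$ analysis ``applies directly''; you should record why $s<0$ is forced, namely that $\sum_k v(z_k)=-nv(d)/(d-1)<0$, so some $v(z_k)$ is negative. Second, and more substantively, the whole argument lives inside $\overline\Q$, so it tacitly assumes that a parabolic parameter $c$ is algebraic; this is part of what the lemma asserts (it is needed for $d^{d/(d-1)}c\in\overline\Z$ to even make sense), so a blind proof must address it. It is true and not difficult: for a fixed period $n$ and root of unity $\zeta$, the resultant in $z$ of $f_c^n(z)-z$ and $(f_c^n)'(z)-\zeta$ is a polynomial in $c$ with algebraic coefficients which is not identically zero (for $|c|$ large all cycles of $f_c$ are repelling), so $c\in\overline\Q$, and then the cycle points are algebraic as roots of $f_c^n(z)-z$. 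With these two additions your local argument is a complete and correct proof of the cited statement.
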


This lemma gives a strong restriction on the elements of $\Par_d$.

For any prime number $p$, there is a $p$-adic valuation $\nu_p:\Q\to\Z\cup\{\infty\}$ with $\nu_p(p) = 1$. It can be extended to a valuation map $\nu_p:\overline\Q\to\Q\cup\{\infty\}$. Note that this extension is not unique. We fix one such extension.

\begin{corollary}\label{cor: p-adic valuation of c}
If $c \in \Par_d$, then $\nu_p(c) = -\nu_p \left(d^{\frac{d}{d-1}} \right)$ if $p$ divides $d$ and $\nu_p(c) \ge 0$ otherwise.
\end{corollary}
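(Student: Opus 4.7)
The plan is to derive both parts of the statement directly from Lemma~\ref{lemma: Milnor}, since this corollary is really just a translation of Milnor's integrality and coprimality conditions into valuation-theoretic language. The containment $d^{d/(d-1)} c \in \overline{\Z}$ says $\nu_p(d^{d/(d-1)} c) \ge 0$ for every prime $p$ and every extension of $\nu_p$ to $\overline{\Q}$, which rearranges to the one-sided inequality $\nu_p(c) \ge -\nu_p(d^{d/(d-1)})$. This will handle half of each case and do all the work when $p \nmid d$.

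When $p \nmid d$, I note that $\nu_p(d) = 0$ and hence $\nu_p(d^{d/(d-1)}) = \tfrac{d}{d-1}\nu_p(d) = 0$, so the lower bound immediately becomes $\nu_p(c) \ge 0$, which is exactly the claim. When $p \mid d$, I use the second conclusion of Lemma~\ref{lemma: Milnor}: the equality of $\overline{\Z}$-modules $(d^{d/(d-1)} c)\overline{\Z} + d\overline{\Z} = \overline{\Z}$ is a B\'ezout identity, so there exist $a, b \in \overline{\Z}$ with
\[
a \cdot d^{d/(d-1)} c \; + \; b \cdot d \; = \; 1.
\]
The non-archimedean triangle inequality gives $0 = \nu_p(1) \ge \min\bigl(\nu_p(a\, d^{d/(d-1)} c),\, \nu_p(bd)\bigr)$; since both summands lie in $\overline{\Z}$ and therefore have non-negative valuation, this minimum is in fact $0$. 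Because $p \mid d$ and $b \in \overline{\Z}$, the second summand satisfies $\nu_p(bd) \ge \nu_p(d) > 0$, so the vanishing must come from the first summand: $\nu_p(a) + \nu_p(d^{d/(d-1)}) + \nu_p(c) = 0$. Using $\nu_p(a) \ge 0$, this gives the reverse bound $\nu_p(c) \le -\nu_p(d^{d/(d-1)})$, and combining with the earlier lower bound yields the equality.

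There is no real obstacle here; the only conceptual point to grasp is that the second clause of Lemma~\ref{lemma: Milnor} should be read as a B\'ezout relation between the two specific elements $d^{d/(d-1)} c$ and $d$ of $\overline{\Z}$, after which the ultrametric inequality immediately does the job. Everything else is bookkeeping with the multiplicativity of $\nu_p$.
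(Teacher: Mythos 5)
Your proposal is correct and follows essentially the same route as the paper: both read Lemma~\ref{lemma: Milnor} as a B\'ezout identity $d^{\frac{d}{d-1}}c\,x + d\,y = 1$ with $x,y\in\overline\Z$ and then use the ultrametric valuation to pin down $\nu_p\bigl(d^{\frac{d}{d-1}}c\bigr)$, the only difference being that you argue via the minimum of the two valuations while the paper phrases the $p\mid d$ case as a short contradiction.
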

\begin{proof}
By Lemma \ref{lemma: Milnor}, there are $x, y \in \overline\Z$ such that $d^{\frac{d}{d-1}}c x + d y = 1$.
If $p$ divides~$d$, then $\nu_p\left(d^{\frac{d}{d-1}}c\right) = 0$. Indeed, if $\nu_p\left(d^{\frac{d}{d-1}}c\right) > 0$, then $0 < \nu_p\left(d^{\frac{d}{d-1}}c x + d y\right) = \nu_p(1) = 0$, which is a contradiction.
If $p$ does not divide~$d$, then $0 \le \nu_p\left(d^{\frac{d}{d-1}}c\right) = \nu_p\left(d^{\frac{d}{d-1}}\right)+\nu_p(c) = \nu_p(c)$.
\end{proof}

\begin{corollary}\label{cor: leading coeff of P}
Let $c\in\Par_d$ and $P(T)=a_nT^n+\dots+a_0\in\Z[T]$ be the minimal polynomial of $c$ with $a_n>0$. Then we have $a_n=d^{\frac{nd}{d-1}} \in \Z$.
\end{corollary}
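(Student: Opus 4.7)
The plan is to factor $P(T) = a_n \prod_{i=1}^n (T - c_i)$ over $\overline\Q$, where $c_1, \dots, c_n$ are the Galois conjugates of $c$, and to pin down $a_n$ one prime at a time via $p$-adic valuations. Writing $e_k$ for the $k$-th elementary symmetric polynomial in $c_1, \dots, c_n$ (with $e_0 = 1$), each coefficient $(-1)^k a_n e_k$ of $P$ lies in $\Z$; because $P$ is primitive, for every prime $p$ one has
\[
    \nu_p(a_n) = \max_{0 \le k \le n}\bigl(-\nu_p(e_k)\bigr).
\]

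The crucial input is Corollary~\ref{cor: p-adic valuation of c} applied not only to $c$ but to each Galois conjugate $c_i$; this is legitimate because $\Par_d$ is Galois invariant, as noted in the introduction. Setting $v_p := \nu_p\bigl(d^{d/(d-1)}\bigr) = \frac{d}{d-1}\nu_p(d)$ for the fixed extension of $\nu_p$ to $\overline\Q$, I obtain $\nu_p(c_i) = -v_p$ for all $i$ when $p \mid d$, and $\nu_p(c_i) \ge 0$ for all $i$ when $p \nmid d$.

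A short ultrametric computation now finishes the proof. If $p \nmid d$, every $e_k$ is $p$-integral, so $\nu_p(a_n) = 0$. If $p \mid d$, the ultrametric inequality gives $\nu_p(e_k) \ge -k v_p$ for every $k$, while the multiplicative identity $\nu_p(e_n) = \sum_i \nu_p(c_i) = -n v_p$ shows this bound is sharp at $k = n$; hence $\nu_p(a_n) = n v_p$. Multiplying across primes,
\[
    a_n = \prod_{p \mid d} p^{n v_p} = \Bigl(\prod_{p \mid d} p^{\nu_p(d)}\Bigr)^{\!nd/(d-1)} = d^{\,nd/(d-1)},
\]
and integrality of $d^{nd/(d-1)}$ is then forced by $a_n \in \Z$.

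I do not foresee a serious obstacle in this strategy; the one point requiring care is ensuring that the maximum in the formula for $\nu_p(a_n)$ is genuinely attained, which rests on the primitivity (content one) of the minimal polynomial $P$. Everything else is standard symmetric-function and ultrametric bookkeeping, and the final rewriting uses only that $\prod_{p \mid d} p^{\nu_p(d)} = d$.
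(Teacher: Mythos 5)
Your proposal is correct and follows essentially the same route as the paper: apply Corollary~\ref{cor: p-adic valuation of c} to every Galois conjugate (using Galois invariance of $\Par_d$) and determine $\nu_p(a_n)$ prime by prime from the primitivity of $P$. The only cosmetic difference is that the paper packages the valuation bookkeeping via Gauss's lemma, $0=\nu_p(P)=\nu_p(a_n)+\sum_{i}\min\{0,\nu_p(c_i)\}$, whereas you carry it out by hand through ultrametric estimates on the elementary symmetric functions; both give $\nu_p(a_n)=\frac{nd}{d-1}\nu_p(d)$ for $p\mid d$ and $\nu_p(a_n)=0$ otherwise.
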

\begin{proof}
Let $c_{1}=c,c_{2},\dots,c_{n}$ be the Galois conjugates of $c$. They are also parabolic. For any $Q \in \overline\Q[T]$, we denote by $\nu_p(Q)$ the minimum of the $p$-adic valuation of its coefficients. By Gauss's lemma~\cite[Lemma 1.6.3]{MR2216774}, it follows that for any prime $p$, we have
\[
    0 
    = \nu_p(P) 
    = \nu_p(a_n) + \sum_{i = 1}^n \nu_p(T - c_i) 
    =\nu_p(a_n) + \sum_{i = 1}^n \min\{0, \nu_p(c_i)\}.
\]
Thus, Corollary~\ref{cor: p-adic valuation of c} implies that $\nu_p(a_n) = \frac{nd}{d-1}\nu_p(d)$ if $p$ divides $d$ and $\nu_p(a_n) = 0$ otherwise. Hence, we have $a_n = d^{\frac{nd}{d-1}}\in \Z$.
\end{proof}

\subsection{Capacity theory} We recall a lemma from capacity theory, which was used by Noytaptim and Petsche.

Let $K \subseteq \C$ be a compact set. For any integer $n \ge 2$, we define the \textit{n-th diameter } of $K$ as
\[
    d_n(K) = \sup_{z_1, \dots, z_n \in K}
    \prod_{1 \le i < j \le n} |z_i - z_j|^{\frac{2}{n(n - 1)}}.
\]
The sequence $\{d_n(K) \colon n \ge 2\}$ is monotone decreasing, by \cite[Theorem~5.5.2]{MR1334766}. 
The \textit{transfinite diameter} of $K$ is defined as
\[
    d_\infty(K) = \lim_{n \rightarrow \infty} d_n(K).
\]
The transfinite diameter of a real interval $[a, b] \subseteq \R$ is $d_\infty([a,b])=(b - a)/4$, see \cite[Corollary 5.2.4]{MR1334766}. The $n$-th diameters of $[a, b]$ were explicitly computed in \cite{MR4710201}, but it seems to us that they were already known by Stieltjes~\cite{stieltjes1885quelques} and Schur~\cite{MR1544303}.

\begin{lemma}[{\cite[Theorem~2]{MR4710201}}]\label{lemma: nth diameter}
For $n \ge 2$, the $n$-th diameter of a real interval $[a, b]$ is given by
\[
    d_n([a, b]) = (b - a) D_n^{\frac{1}{n(n - 1)}}
\]
where $\{D_n \colon n \ge 2\}$ is defined recursively by $D_2 = 1$ and 
\[
    D_n = \frac{n^n(n - 2)^{n - 2}}{2^{2n - 2}(2n - 3)^{2n - 3}} D_{n - 1}.
\]
\end{lemma}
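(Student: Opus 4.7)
The plan is to reduce by scaling to the unit interval, identify the extremal configuration via a standard Lagrange-multiplier argument, solve the resulting differential equation in terms of a (shifted) Jacobi polynomial, and then derive the recursion from known identities for Jacobi polynomials. First, the affine change of variable $x = a + (b - a)\widetilde{x}$ scales each distance $|x_i - x_j|$ by $b - a$, so $d_n([a, b]) = (b - a)\,d_n([0, 1])$, and it suffices to prove $d_n([0, 1])^{n(n - 1)} = D_n$ together with the stated recursion. By compactness the supremum defining $d_n([0, 1])$ is attained; let $x_1 < x_2 < \dots < x_n$ be an optimizer. Endpoint considerations force $x_1 = 0$ and $x_n = 1$ (otherwise one could move an endpoint outward and strictly increase the product), while the vanishing of the log-gradient at interior coordinates gives $\sum_{j \ne k}(x_k - x_j)^{-1} = 0$ for $2 \le k \le n - 1$. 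Setting $Q(x) = \prod_{i = 1}^n(x - x_i)$ and using the identity $2\sum_{j \ne k}(x_k - x_j)^{-1} = Q''(x_k)/Q'(x_k)$, these interior conditions translate into $Q''(x_k) = 0$ at every interior $x_k$; a degree count then yields the differential identity
\[
    x(x - 1)\,Q''(x) = n(n - 1)\,Q(x).
\]

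Writing $Q(x) = x(x - 1)\,R(x)$ with $R$ monic of degree $n - 2$, substitution into this ODE produces a shifted Jacobi differential equation that forces $R$ to be a scalar multiple of the Jacobi polynomial $P_{n - 2}^{(1, 1)}$ transplanted to $[0, 1]$. The maximal value of $\prod_{i < j}(x_i - x_j)^2$ equals $\mathrm{disc}(Q)$, and the factorization of discriminants through the product $Q = x(x - 1) \cdot R$ gives
\[
    \mathrm{disc}(Q) = \mathrm{disc}\bigl(x(x - 1)\bigr) \cdot \mathrm{disc}(R) \cdot \mathrm{Res}\bigl(x(x - 1),\,R\bigr)^2 = R(0)^2\,R(1)^2\,\mathrm{disc}(R),
\]
which expresses $D_n$ explicitly in terms of the boundary values of $R$, its leading coefficient, and its discriminant.

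The recursion $D_n / D_{n - 1} = n^n(n - 2)^{n - 2} / [\,2^{2n - 2}(2n - 3)^{2n - 3}\,]$ should then follow from classical closed-form expressions for $P_m^{(1, 1)}(\pm 1)$, for its leading coefficient, and for $\mathrm{disc}\bigl(P_m^{(1, 1)}\bigr)$, the last being due to Stieltjes and Schur. The main obstacle, I expect, is precisely this final step: the ratio $D_n / D_{n - 1}$ mixes the endpoint values, the Jacobi discriminants, and the normalizing leading coefficients at two consecutive degrees, and producing the stated rational expression requires a careful bookkeeping with factorials and Pochhammer symbols. An alternative self-contained route, which would avoid invoking the full Jacobi-polynomial machinery, is to perturb an $(n - 1)$-point extremal configuration into an $n$-point one by inserting a single new node and tracking the change in the Vandermonde product directly; however, this appears less transparent than the orthogonal-polynomial computation.
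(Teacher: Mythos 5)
The paper does not actually prove this lemma: it is imported verbatim from Noytaptim and Petsche \cite{MR4710201} (with the remark that the computation essentially goes back to Stieltjes and Schur), and the paper later uses only the monotonicity of $D_n^{1/(n(n-1))}$ together with the values $D_2=1$, $D_3=2^{-4}$, $D_4=5^{-5}$. So there is no internal proof to compare against, and your proposal has to stand on its own.

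Your skeleton is the classical Fekete-point argument and it is sound: scaling reduces to $[0,1]$; a maximizer exists and has $x_1=0$, $x_n=1$; interior stationarity gives $Q''(x_k)=0$ at the $n-2$ interior nodes, whence $x(x-1)Q''(x)=n(n-1)Q(x)$ by comparing degrees and leading coefficients; writing $Q=x(x-1)R$ identifies $R$ (up to normalization) with the transplanted Jacobi polynomial $P_{n-2}^{(1,1)}$; and $d_n([0,1])^{n(n-1)}=\mathrm{disc}(Q)=R(0)^2R(1)^2\,\mathrm{disc}(R)$ for monic $R$, since $\mathrm{disc}\bigl(x(x-1)\bigr)=1$. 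The genuine gap is that the lemma \emph{is} the explicit value, and your write-up stops exactly there: you assert that the recursion ``should follow'' from the endpoint values, the leading coefficient $2^{-(n-2)}\binom{2n-2}{n-2}$ of $P_{n-2}^{(1,1)}$, and the Stieltjes--Schur discriminant formula, and you yourself flag this bookkeeping as the main obstacle. Nothing in the proposal certifies the specific factor $n^n(n-2)^{n-2}/\bigl(2^{2n-2}(2n-3)^{2n-3}\bigr)$; the conversion from the monic $R$ on $[0,1]$ to the standard Jacobi normalization on $[-1,1]$ alone contributes powers of $2$ and of the leading coefficient in which an exponent error would go unnoticed, and the recursion only emerges after taking the ratio of two consecutive closed-form products. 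As it stands, this is a correct strategy with the decisive computation missing, not yet a proof. A cheap partial check you could at least include: for $n=3$ the maximizer is $\{0,\tfrac12,1\}$, giving $D_3=2^{-4}$, and the claimed recursion indeed yields $D_3=\tfrac{3^3}{2^4\cdot 3^3}D_2=2^{-4}$ and $D_4=5^{-5}$, consistent with the only values the paper actually needs.
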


Actually, we do not need the full lemma. We only use the fact that $D_n^{\frac{1}{n(n - 1)}}$ is decreasing and the first few values: $D_2=1$, $D_3=2^{-4}$, and $D_4=5^{-5}$.
\section{Proof of Theorem~\ref{theorem: noytaptim - petsche}}\label{section:3}

In this section, we provide an alternative proof of~Theorem~\ref{theorem: noytaptim - petsche}, following the idea of Buff and Koch.

The following lemma is extracted from their proof.

\begin{lemma}\label{lemma: galois orbit in -2, 0}
If $c \in \overline \Z$ and all its Galois conjugates are in the interval $[-2, 0]$, then $c \in \{0, -1, -2\}$.
\end{lemma}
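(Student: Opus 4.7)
The plan is to reduce the statement to Kronecker's theorem, which asserts that an algebraic integer all of whose Galois conjugates lie on the unit circle must be a root of unity. The natural substitution converting a real interval of length four into the unit circle is to introduce $\alpha$ as a root of
\[
    x^2 - (c + 1) x + 1 \in (\Z[c])[x],
\]
so that $c + 1 = \alpha + \alpha^{-1}$.

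I would first observe that since $c \in \overline{\Z}$ and $\alpha$ satisfies a monic polynomial with coefficients in $\Z[c]$, $\alpha$ is itself an algebraic integer. For every Galois conjugate $c'$ of $c$, the hypothesis gives $c' + 1 \in [-1, 1]$, so the quadratic $x^2 - (c' + 1) x + 1$ has real coefficients and discriminant at most $-3$; its two (complex-conjugate) roots therefore lie on the unit circle. Since every Galois conjugate of $\alpha$ arises in this way, all conjugates of $\alpha$ lie on the unit circle, and Kronecker's theorem forces $\alpha$ to be a root of unity.

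It then remains to enumerate. If $\alpha$ is a primitive $n$-th root of unity, the Galois conjugates of $c + 1$ over $\Q$ are the values $2\cos(2\pi j/n)$ with $\gcd(j, n) = 1$; requiring each to lie in $[-1, 1]$, and taking $j = 1$, forces $\cos(2\pi/n) \leq 1/2$, hence $n \leq 6$. A direct check over $n \in \{1, 2, 3, 4, 5, 6\}$ rules out $n = 1, 2$ (which give $c + 1 = \pm 2$) and $n = 5$ (whose conjugate $2\cos(4\pi/5)$ is below $-1$), while $n = 3, 4, 6$ yield $c = -2, -1, 0$ respectively.

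No serious obstacle arises; the essential content is recognizing the substitution $c + 1 = \alpha + \alpha^{-1}$, which translates the real-interval hypothesis into a unit-circle hypothesis tailor-made for Kronecker's theorem, and the remainder of the argument is a finite case check.
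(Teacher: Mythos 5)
Your proof is correct and follows essentially the same route as the paper: both pass from the interval to the unit circle via the substitution $c$ (respectively $c+1$) $= \alpha + \alpha^{-1}$, apply Kronecker's theorem to conclude $\alpha$ is a root of unity, and finish with a finite enumeration. The shift by $1$ and the bound $n \le 6$ on the order of the root of unity are only cosmetic differences from the paper's direct listing of roots of unity on the arc with real part in $[-1,0]$.
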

\begin{proof}
Let $F(z) = z + z^{-1}$. Then $F$ maps the unit circle onto the interval $[-2, 2]$. In particular, $F$ maps the arc $A = \{e^{i \theta} \colon \frac{\pi}{2} \le \theta \le \frac{3\pi}{2}\}$ onto the interval $[-2, 0]$.

Let $c$ satisfy the given condition and $a\in F^{-1}(c)$. Then $a\in \overline \Z$ and all its Galois conjugates are in $A$. By Kronecker's theorem~\cite[Theorem 1.5.9]{MR2216774}, $a$ must be a root of unity. The condition $a\in A$ forces $a$ to be $-1$, $\pm i$, or $-\frac{1}{2} \pm \frac{\sqrt{3}}{2}i$. Hence, $c \in \{ 0, -1, -2 \}$.
\end{proof}

Now we are ready to give the proof of Theorem~\ref{theorem: noytaptim - petsche}.

\begin{proof}[Proof of Theorem~\ref{theorem: noytaptim - petsche}]
The inclusion $\supseteq$ is clear. Now let $c \in \PCF_d \cap \Qtr$. First we notice that by the definition, $c\in \overline\Z$ and all Galois conjugates of $c$ are PCF.  We distinguish two cases.

\textsc{Case 1}: $d$ is odd. We have $-\alpha(d) \le c \le \alpha(d)$. In particular, by Lemmas \ref{lemma: lemma 2.1} and ~\ref{lemma: carleson-gamelin}, $f_c$ has a fixed point $z_c$ with $f^n_c(0) \rightarrow z_c$ as $n\to\infty$. Given that the orbit $\{ f^n_c(0) \colon n \in \N\}$ is finite, we have $f^n_c(0) = z_c$ for large enough $n$. Since $f_c$ is strictly increasing on $\R$, it follows that $z_c = 0$ and $c = 0$.

\textsc{Case 2}: $d$ is even. We first assume that $0 \le c \le \alpha(d)$. As in \textsc{Case 1}, by Lemmas \ref{lemma: lemma 2.1} and ~\ref{lemma: carleson-gamelin},  $f_c$ has a fixed point $z_c$ with $f^n_c(0) \rightarrow z_c$ as $n\to\infty$. Given that the orbit $\{ f^n_c(0) \colon n \in \N\}$ is finite, we have $f^n_c(0) = z_c$ for large enough $n$. Again, since $f_c$ is strictly increasing on $\R_{\ge 0}$, we have $z_c = 0$ and $c = 0$.
This argument works for all Galois conjugates of $c$. Therefore, we can assume that all Galois conjugates of $c$ are contained in $[\beta(d), 0] \subseteq [-2, 0]$. By Lemma~\ref{lemma: galois orbit in -2, 0}, we have $c \in \{0, -1, -2\}$ if $d=2$ and $c \in \{0, -1\}$ if $d\ge 4$.
\end{proof}
\section{Proof of Theorem~\ref{theorem: theorem}}\label{section:4}

In this section, we give the proof of Theorem~\ref{theorem: theorem}. From now on, we assume that $c\in\Par_d\cap\Qtr$. 

Let $P(T)=a_nT^n+\dots+a_0\in\Z[T]$ be the minimal polynomial of $c$ with $a_n>0$, let
$c_{1}=c,c_{2},\dots,c_{n}$ be the Galois conjugates of $c$, and let
\[
    \Delta(P) = a_n^{2(n - 1)} \prod_{1 \le i < j \le n} (c_i - c_j)^2 \in \Z
\]
be the discriminant of $P$. Since $c$ is totally real, we have $\Delta(P)>0$.

We first bound $\Delta(P)$ from below and above.

\begin{lemma}\label{lemma: bound discriminant}
Assume that $c \in \Par_d \cap \Qtr$ and $d\ge 4$ is even. Then $a_n^{n - 1}$ divides $\Delta(P)$ and
\[
    a_n^{n-1} 
    \le \Delta(P) 
    \le a_n^{2(n - 1)} \left(2^{\frac{1}{d-1}} - 1\right)^{n(n - 1)} D_n,
\]
where $D_n$ is defined in Lemma \ref{lemma: nth diameter}.
\end{lemma}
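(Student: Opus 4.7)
The plan is to prove the three assertions in sequence. The divisibility $a_n^{n-1} \mid \Delta(P)$ is purely arithmetic: I would verify $\nu_p(\Delta(P)) \ge (n-1)\nu_p(a_n)$ one prime at a time. For $p \nmid d$ this is trivial, since Corollary~\ref{cor: leading coeff of P} gives $\nu_p(a_n) = 0$ while $\Delta(P) \in \Z$. For $p \mid d$, Corollary~\ref{cor: p-adic valuation of c} yields $\nu_p(c_i) = -\nu_p(d^{d/(d-1)}) = -\nu_p(a_n)/n$ for every $i$, so the ultrametric inequality gives $\nu_p(c_i - c_j) \ge -\nu_p(a_n)/n$; combining this with the contribution $2(n-1)\nu_p(a_n)$ from the factor $a_n^{2(n-1)}$ in $\Delta(P)$ produces the desired inequality. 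The lower bound $\Delta(P) \ge a_n^{n-1}$ then follows at once, since $c$ totally real forces $\Delta(P) > 0$, making $\Delta(P)$ a positive integer divisible by $a_n^{n-1}$.

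The upper bound is the delicate step, and the main obstacle is to localize every Galois conjugate $c_i$ into the short interval $[\beta(d), -1]$. Since $\Par_d$ and $\Qtr$ are Galois invariant, each $c_i$ is itself a totally real real parabolic parameter. Combining Lemma~\ref{lemma: lemma 2.1}(iv),(vi) with Lemma~\ref{lemma: carleson-gamelin}(i) shows that every real parabolic parameter lies in $\{\alpha(d), \gamma(d)\} \cup [\beta(d), -1]$. The two endpoints must be ruled out: both $\alpha(d)$ and $\gamma(d)$ are nonzero rational multiples of $d^{-1/(d-1)}$, and since $d \ge 4$ is even, $d-1 \ge 3$ is odd, so $X^{d-1} - d$ has $d^{1/(d-1)}$ as its unique real root. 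The elementary bound $m^{d-1} > d$ for all integers $m \ge 2$ (valid when $d \ge 3$) shows that $d^{1/(d-1)}$ is irrational, so its minimal polynomial, which divides $X^{d-1} - d$, has degree at least two and hence a non-real root. Therefore $d^{-1/(d-1)} \notin \Qtr$, and neither $\alpha(d)$ nor $\gamma(d)$ is totally real.

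Once all $c_i$ are confined to $[\beta(d), -1]$, an interval of length $-1 - \beta(d) = 2^{1/(d-1)} - 1$, the upper bound follows immediately from the definition of the $n$-th diameter together with Lemma~\ref{lemma: nth diameter}: one has $\prod_{i<j}(c_i - c_j)^2 \le d_n([\beta(d), -1])^{n(n-1)} = (2^{1/(d-1)} - 1)^{n(n-1)} D_n$, and multiplying by $a_n^{2(n-1)}$ completes the proof.
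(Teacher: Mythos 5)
Your proposal is correct and follows essentially the same route as the paper: the same prime-by-prime valuation estimate via Corollaries~\ref{cor: p-adic valuation of c} and \ref{cor: leading coeff of P} for the divisibility and lower bound, the same use of Lemmas~\ref{lemma: lemma 2.1} and \ref{lemma: carleson-gamelin} to confine all conjugates to $[\beta(d),-1]$, and the same application of Lemma~\ref{lemma: nth diameter} for the upper bound. The only difference is that you spell out why $\alpha(d)$ and $\gamma(d)$ are not totally real (via the unique real root of $X^{d-1}-d$ for $d-1$ odd), a fact the paper simply asserts, which is a welcome addition.
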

\begin{proof}
For any prime $p$ that divides $d$, Corollaries~\ref{cor: p-adic valuation of c} and \ref{cor: leading coeff of P} imply that
\begin{align*}\label{eq:2}
    \nu_{p}(\Delta(P)) 
    & = 2(n-1)\nu_{p}(a_{n}) + 2 \sum_{1 \le i < j \le n} \nu_{p} (c_i - c_j)\\
    & \geq 2 (n-1) \frac{nd}{d-1} \nu_{p}(d) - n(n-1) \frac{d}{d-1} \nu_{p}(d)\\
    & = n(n-1) \frac{d}{d-1} \nu_{p}(d)\\
    & = (n-1) \nu_{p}(a_{n}).
\end{align*}
Hence, it follows that $a_n^{n-1}$ divides $\Delta(P)$ and that $a_n^{n-1} \le \Delta(P)$.

Suppose that $d\ge 4$ is even. If $c \in \Par_d$ and $c > -1$, then $c = \alpha(d)$ or $c = \gamma(d)$ by Lemmas \ref{lemma: lemma 2.1} and \ref{lemma: carleson-gamelin}. However, neither of them is totally real. Therefore, we can assume that 
$c_1,\dots,c_n \in [\beta(d), -1]$ and get the upper bound
\[
    \Delta(P)
    \leq a_{n}^{2(n-1)}d_{n}([\beta(d),-1])^{n(n-1)}.
\]
By Lemma~\ref{lemma: nth diameter}, we have 
\[
    \Delta(P) 
    \le a_n^{2(n-1)} \left(( - 1-\beta(d))D_{n}^{\frac{1}{n(n-1)}}\right)^{n(n-1)}
    =a_n^{2(n - 1)} \left(2^{\frac{1}{d-1}} - 1\right)^{n(n - 1)} D_n.
\]
This completes the proof.
\end{proof}

Now we are ready to give the proof of Theorem~\ref{theorem: theorem}.

\begin{proof}[Proof of Theorem~\ref{theorem: theorem}]
If $d$ is odd, then  the only real parabolic points are $\pm \alpha(d)$ by Lemmas \ref{lemma: lemma 2.1} and \ref{lemma: carleson-gamelin}. They are totally real only if $d=3$.
    
If $d \ge 4$ is even, then by Corollary \ref{cor: leading coeff of P} and Lemma~\ref{lemma: bound discriminant}, we have
\begin{equation}\label{eq:4.1}
    d^{\frac{d}{d-1}} \left(2^{\frac{1}{d-1}} - 1\right) D_{n}^{\frac{1}{n(n-1)}}\geq1.
\end{equation}
We can check that $\sigma(d) := d^{\frac{d}{d-1}}\left(2^{\frac{1}{d-1}}-1\right)$
is decreasing and converges to $\log 2$ as $d \rightarrow \infty$. Moreover, we have seen that $\tau(n) := D_{n}^{\frac{1}{n(n-1)}}$ is decreasing and converges to $1/4$ as $n \rightarrow \infty$.

We want to show by contradiction that $\Par_d \cap \Qtr  = \emptyset$ if $d \ge 4$ is even. We suppose to the contrary and distinguish four subcases.

\textsc{Case 1}: If $n \in \{1, 2\}$ and $d \geq 4$, then $a_{n}=d^{\frac{nd}{d-1}}$ is not an integer, contradicting Corollary~\ref{cor: leading coeff of P}.

\textsc{Case 2}: If $n\geq3$ and $d\geq6$, then
\[
    \sigma(d)\tau(n)\leq\sigma(6)\tau(3)=6^{\frac{6}{5}}(2^{\frac{1}{5}}-1)2^{-\frac{2}{3}}<1,
\]
contradicting \eqref{eq:4.1}.

\textsc{Case 3}: If $n\geq4$ and $d\geq4$, then
\[
    \sigma(d)\tau(n)\leq\sigma(4)\tau(4)=4^{\frac{4}{3}}(2^{\frac{1}{3}}-1)5^{-\frac{5}{12}}<1,
\]
contradicting \eqref{eq:4.1}.

\textsc{Case 4}: If $n=3$ and $d=4$, then
\begin{align*}
    P(T) & =a_{3}(T - c_1)(T - c_2)(T - c_3)\\
    & = a_{3} T^{3} + a_{2} T^{2} + a_{1}T + a_{0},\text{ where }a_{3} = 2^{8},
\end{align*}
and 
\[
    \Delta(P) 
    = a_{1}^{2}a_{2}^{2} 
    - 4 a_{1}^{3}a_{3} 
    - 4 a_{0}a_{2}^{3}
    - 27 a_{0}^{2}a_{3}^{2} 
    + 18 a_{0}a_{1}a_{2}a_{3}.
\]
By Lemma~\ref{lemma: bound discriminant}, $\Delta(P)$ is a multiple of $a_{3}^{2}=2^{16}$ and
\[
    \Delta(P) 
    \leq a_{n}^{2(n-1)} \left(2^{\frac{1}{d-1}} - 1\right)^{n(n-1)}D_{n}=2^{28}(2^{\frac{1}{3}}-1)^{6}<2^{17}.
\]
Therefore, we must have $\Delta(P)=2^{16}$. Moreover, by Corollary \ref{cor: p-adic valuation of c}, we have
\begin{align}
\nu_{2}(a_{0}) & = \nu_{2}(2^{8}c_{1}c_{2}c_{3})=0,\nonumber \\
\nu_{2}(a_{1}) & = \nu_{2}(2^{8}(c_{1}c_{2}+c_{2}c_{3}+c_{3}c_{1}))\geq\left\lceil 8-{\textstyle \frac{16}{3}}\right\rceil {\textstyle =3},\label{eq:4.2}\\
\nu_{2}(a_{2}) & = \nu_{2}(2^{8}(c_{1}+c_{2}+c_{3}))\geq\left\lceil 8 - {\textstyle \frac{8}{3}}\right\rceil {\textstyle =6},\label{eq:4.3}\\
\nu_{2}(a_{3}) & = \nu_{2}(2^{8})=8.\nonumber 
\end{align}
Then
\begin{align*}
\nu_{2}(a_{1}^{2}a_{2}^{2}) & \geq3\cdot2+6\cdot2=18,\\
\nu_{2}(-4a_{1}^{3}a_{3}) & \geq2+3\cdot3+8=19,\\
\nu_{2}(-4a_{0}a_{2}^{3}) & \geq2+0+6\cdot3=20,\\
\nu_{2}(18a_{0}a_{1}a_{2}a_{3}) & \geq1+0+3+6+8=18.
\end{align*}
If either \eqref{eq:4.2} or \eqref{eq:4.3} is strict, then $\nu_{2}(a_{1}^{2}a_{2}^{2}), \nu_{2}(18a_{0}a_{1}a_{2}a_{3})\geq19$.
Otherwise, $\nu_{2}(a_{1}^{2}a_{2}^{2}),\nu_{2}(18a_{0}a_{1}a_{2}a_{3})=18$
and $\nu_{2}(a_{1}^{2}a_{2}^{2}+18a_{0}a_{1}a_{2}a_{3})\geq19$. As
a result,
\[
    2^{16}\equiv\Delta(P)\equiv-27a_{0}^{2}a_{3}^{2}\equiv5\cdot2^{16}\text{ mod }2^{19}.
\]
This is a contradiction, so such $P$ does not exist.
\end{proof}

\defbibheading{mybibheading}{
  \section*{References}
}
\printbibliography[heading = mybibheading]

\end{document}